\newtheorem{theorem}{Theorem}[section]
\newtheorem*{theoremint}{Theorem}
\newtheorem{proposition}[theorem]{Proposition}
\newtheorem{lemma}[theorem]{Lemma}
\newtheorem{corollary}[theorem]{Corollary}
\newtheorem*{corollaryint}{Corollary}
\theoremstyle{definition}
\newtheorem{definition}[theorem]{Definition}
\newtheorem{remark}[theorem]{Remark}
\let\phi\varphi
\newcommand{\gdots}{, \, \dots, \,}
\newcommand{\cat}[1]{${\rm CAT}(#1)$}
\let\epsilon\varepsilon
\title{CAT(-1) metrics on small cancellation groups}
\author{Samuel Brown}
\begin{document}

\maketitle

\begin{abstract}
We give an explicit proof that groups satisfying a ``uniform'' $C'(1/6)$ small cancellation condition admit a geometric action on a \cat{-1} space. The proof consists of a direct construction of a piecewise hyperbolic structure on the presentation complex of such a group, together with folding moves to make the complex negatively curved. This argument is originally due to Gromov.
\end{abstract}

\section{Introduction}

A driving force behind modern geometric group theory is the search for negatively curved metrics on groups. Traditionally, the most popular notion of negative curvature has been word hyperbolicity, a coarse notion whose interplay with more local notions, such as the \cat{k} conditions, is not fully understood. Recently, much attention has been given to the \cat{0} condition, particularly in the context of \cat{0} cube complexes, and there has been a great deal of success in showing that certain families of hyperbolic groups act geometrically on \cat{0} cube complexes (see for example \cite{hsuwise10, wisemanu}). Often, the \cat{0} spaces obtained by this way are of very high dimension. An alternative approach is to look directly for \cat{-1} metrics on known spaces possessing geometric actions of groups. Although this approach is optimistic, it has the advantage of proving stronger bounds on the \cat{0} dimension of groups than the cubulation approach, as well as strengthening \cat{0} to \cat{-1}. 

In \cite{brown16}, we showed that hyperbolic limit groups were \cat{-1}, in particular of \cat{-1} dimension 2, by proving a combination theorem for a particular family of locally \cat{-1} spaces: 2-dimensional piecewise hyperbolic simplicial complexes satisfying the Link Condition. The same holds for hyperbolic graphs of free groups with cyclic edge groups, which were known to be \cat{0}, but not of dimension 2 \cite{hsuwise10}.

In the current paper, we apply similar techniques to groups satisfying a uniform version of the $C'(1/6)$ small cancellation condition. We are able to find an explicit piecewise hyperbolic metric on the presentation complex of such groups, and apply some folding moves to make this metric locally \cat{-1}. Specifically, a presentation is called \emph{uniformly $C'(1/6)$} if \emph{pieces} (overlaps between relators) are all shorter than a sixth of the length of the shortest relator. Our main theorem is then:

\begin{theoremint}[Theorem \ref{thm:smallcanccat-1}]
	Let $G$ be a group with a uniformly $C'(1/6)$ presentation. Then $G$ acts geometrically on a 2-dimensional \cat{-1} space.
\end{theoremint}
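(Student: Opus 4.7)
The strategy is to realize $G$ as the fundamental group of a compact, $2$-dimensional, locally \cat{-1} complex $Y$; by a Cartan--Hadamard-type theorem for piecewise hyperbolic complexes, the universal cover $\widetilde Y$ is then \cat{-1}, and $G$ acts geometrically on it by deck transformations. The natural starting point is the presentation complex $P_G$, with its single vertex $v$, one loop per generator, and one $2$-cell per relator.

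I would identify each $2$-cell attached along a relator $r$ with a regular hyperbolic $|r|$-gon of common side length $\ell>0$, to be tuned at the end. Uniform side length makes the gluings along the $1$-skeleton isometric, and each convex hyperbolic polygon is itself \cat{-1}, so the resulting piecewise hyperbolic metric is locally \cat{-1} away from $v$. At $v$, local \cat{-1}ness reduces in dimension two to the Link Condition: every embedded cycle in the metric link $L_v$ must have angle-length at least $2\pi$. An embedded cycle $\gamma$ of combinatorial length $k$ in $L_v$ corresponds to $k$ corners of $2$-cells arranged cyclically around $v$, equivalently to a cyclically reduced word $w = t_1 \cdots t_k$ in the generators whose every $2$-letter subword appears in some relator. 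The accompanying ``flower'' of $2$-cells around the potential cone point is topologically a disk diagram, and a Greendlinger-type argument --- guaranteed by the $C'(1/6)$ condition --- forces this diagram to contain a $2$-cell whose boundary contributes many consecutive corners to $\gamma$. Iterating gives a uniform bound $k \ge k_0$ depending only on the length of the shortest relator; taking $\ell$ small enough that every corner angle exceeds $2\pi/k_0$ then secures the Link Condition.

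The main obstacle is that this naive estimate breaks down in the presence of very short pieces: a single piece of length two shared between two relators already yields a bigon in $L_v$ of total angle less than $2\pi$, no matter how $\ell$ is chosen. To remove such short cycles I would apply folding moves, each of which identifies a pair of corners of $2$-cells glued along a common piece at $v$, producing a quotient $2$-complex $Y$ with $\pi_1(Y) = G$ and a simpler link structure. The heart of the proof lies in verifying (i) that each fold preserves both the fundamental group and the uniform $C'(1/6)$ hypothesis, (ii) that the piecewise hyperbolic metric descends consistently to the quotient, and (iii) that finitely many folds suffice to eliminate all link cycles of angle-length less than $2\pi$. Once this is achieved, $Y$ is a compact locally \cat{-1} $2$-complex with $\pi_1(Y) = G$, and the proof is complete.
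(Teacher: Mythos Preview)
Your outline has the right shape---metrize the presentation complex with hyperbolic polygons, then fold along pieces to kill the short cycles in the link---but two of your three ``heart of the proof'' items fail as stated, and the third is not the argument that actually works.

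First, your metric does not support item (ii). If $|r_1|\neq |r_2|$ then a regular hyperbolic $|r_1|$-gon and a regular hyperbolic $|r_2|$-gon with the same side length have \emph{different} interior angles, so the corner regions (or segments) you want to identify along a common piece are not isometric, and the piecewise hyperbolic metric cannot descend to the quotient. The paper avoids this by metrizing the disc for $r_i$ not as a regular $|r_i|$-gon but as a \emph{singular} polygon $D(|r_i|,g,r)$ built from $|r_i|$ copies of the fixed isoceles triangle $T(g,r)$ coming from a regular $g$-gon, where $g$ is the length of the shortest relator. Then a length-$k$ segment has the same isometry type in every disc, independently of $|r_i|$, and the folds are genuinely isometric. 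The cone point at the centre has angle $2\pi|r_i|/g\geq 2\pi$, so no positive curvature is introduced.

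Second, you only discuss the link at the original vertex $v$. But folding identifies $2$-dimensional segments, so points in the interiors of discs get identified and become new singular vertices; the Link Condition must be checked there as well. In the paper this is a separate, nontrivial case: at such a vertex the link is a quotient of several round circles glued along arcs of length~$\pi$, and one must rule out short loops using the geometry of how diagonals cross.

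Finally, the mechanism that controls link lengths is not a Greendlinger-type ``many consecutive corners'' count. The key lemma is purely about regular polygons: two diagonals of length at most $\lfloor n/6\rfloor$ in a regular $(n{+}1)$-gon meet at internal angle $>2\pi/3$. This is exactly where the uniform $C'(1/6)$ hypothesis enters---pieces have length $<g/6$, so the diagonals subtending their segments are short---and it yields the estimate ``every edge in the folded link contributes $>2\pi/3$'' directly, forcing every embedded cycle (which must have at least three edges once bigons are folded away) to have length $>2\pi$.
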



Although the uniform $C'(1/6)$ condition we use is in general stronger than the standard $C'(1/6)$ condition, it still holds for an important class of $C'(1/6)$ groups; namely, random groups in the density model at density $<1/12$. We therefore have the following corollary:

\begin{corollaryint}[Corollary \ref{cor:randomgroupscat-1}]
		Random groups in the density model, for density $d<1/12$, act geometrically on a 2-dimensional \cat{-1} space.
\end{corollaryint}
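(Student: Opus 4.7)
The plan is to reduce the corollary to Theorem \ref{thm:smallcanccat-1} by checking that a random presentation at density $d<1/12$ is uniformly $C'(1/6)$ with overwhelming probability.

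The key observation is that in Gromov's density model every relator has the same fixed length $\ell$. Consequently the uniformly $C'(1/6)$ condition (pieces shorter than $\tfrac{1}{6}$ of the shortest relator) coincides with the ordinary $C'(1/6)$ condition. So it suffices to invoke the well-known fact that at density $d<1/12$, a random presentation satisfies $C'(1/6)$ with overwhelming probability.

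For a self-contained argument, I would derive this fact by the standard first-moment computation. There are at most $(2m-1)^{2d\ell}$ ordered pairs of relators and $O(\ell^2)$ choices of starting positions within a pair, while the probability that two fixed positions extend to matching reduced words of length $\lceil \ell/6 \rceil$ is at most $(2m-1)^{-\ell/6}$. The expected number of pieces of length $\geq \ell/6$ is therefore bounded by
\[
	(2m-1)^{(2d - 1/6)\ell}\cdot O(\ell^2),
\]
which tends to $0$ as $\ell\to\infty$ since $2d<1/6$. Markov's inequality then gives the claim. Applying Theorem \ref{thm:smallcanccat-1} to the resulting presentation completes the proof.

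No real obstacle arises. The substantive point is the happy coincidence that the uniform and ordinary $C'(1/6)$ conditions collapse in the density model because relator lengths are constant; this is exactly what makes the corollary follow from the main theorem via the classical Gromov--Ollivier random-groups estimate, with the critical density $1/12$ appearing as one half of the small cancellation parameter $1/6$.
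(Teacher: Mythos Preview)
Your approach is correct and matches the paper's: both note that constant relator length collapses the uniform $C'(1/6)$ condition to the ordinary one, and then invoke the classical fact (which the paper cites to Gromov and you sketch via a first-moment bound) that density $<1/12$ yields $C'(1/6)$. One minor omission: Definition~\ref{def:uniformsmallcanc} also forbids proper-power relators, so you should remark that this too holds with overwhelming probability (proper powers form a vanishing fraction of reduced words of length~$\ell$); the paper itself glosses over this point.
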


Wise showed in \cite{wise04} that $C'(1/6)$ groups are \cat{0}, and hence so are random groups at density $<1/12$. Ollivier and Wise then improved this to density $<1/6$ \cite{ollivierwise11}. However, since both results use cubulation, the \cat{0} spaces obtained are of high dimension, and so Corollary \ref{cor:randomgroupscat-1} represents an improvement in dimension as well as curvature.

\begin{remark}
	After completing this paper, we became aware that this argument was originally suggested by Gromov \cite{gromov01}, and a more general version of it (in the context of small cancellation over graphs of groups) is described in \cite{martin13}. The latter paper deals with a \cat{0} metric, but points out that the argument also works in the \cat{-1} case. We would like to thank Alexandre Martin and Anthony Genevois for bringing these two papers to our attention.
\end{remark}

I would like to acknowledge the support of the EPSRC for this work, and Henry Wilton for several helpful discussions.

\section{Preliminaries}

\subsection{Negatively curved complexes}

We will assume the reader is familiar with the definitions and basic theory of \cat{k} spaces; a good reference is \cite{bh}. In this paper, we focus on a specific family of \cat{-1} spaces:

\begin{definition}
	A \emph{piecewise hyperbolic simplicial complex} is a metric simplicial complex, each of whose $n$-simplices is isometric to an $n$-simplex in hyperbolic space $\mathbb{H}^n $.
\end{definition}

The following fundamental theorem is proved in \cite[Chapter II.5]{bh}:

\begin{theorem}[The Link Condition for piecewise hyperbolic simplicial complexes]
	Let $K$ be an piecewise hyperbolic simplicial complex with finitely many isometry types of simplices. Then $K$ is locally \cat{-1} if and only if for each vertex $v \in K$, $\text{link}(v,K)$ is \cat{1}.
\end{theorem}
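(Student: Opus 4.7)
The plan is to reduce the theorem to a statement about the geometry near a single vertex, then use the hyperbolic cone construction to relate curvature of the link to curvature of a neighborhood of the vertex. A metric space is locally \cat{-1} if and only if every point has a \cat{-1} neighborhood, so it suffices to check the \cat{-1} condition in a small ball around every point of $K$.

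First I would dispose of points lying in the interiors of positive-dimensional simplices. For a point $x$ in the relative interior of an $n$-simplex $\sigma$, a small ball in $K$ is isometric to a small ball in $\mathbb{H}^n$ and is therefore automatically \cat{-1}. For a point lying in the relative interior of a proper face $\tau$ but not at a vertex, a small neighborhood splits metrically as a product of an open ball in $\mathbb{H}^{\dim \tau}$ with an open cone over $\text{link}(\tau, K)$, so that checking \cat{-1} reduces to the vertex case. The finiteness of isometry types yields a uniform $\epsilon > 0$ such that for every vertex $v$, the ball $B_\epsilon(v) \subset K$ is isometric to the truncated open hyperbolic cone of radius $\epsilon$ over $\text{link}(v,K)$.

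The main ingredient is then the theorem identifying the curvature of a hyperbolic cone: the open hyperbolic cone $C_{-1}(Y)$ is locally \cat{-1} at the cone point if and only if $Y$ is \cat{1}. Granted this, both directions of the Link Condition follow at once, since $K$ is locally \cat{-1} $\iff$ each $B_\epsilon(v)$ is \cat{-1} $\iff$ each cone $C_{-1}(\text{link}(v,K))$ is \cat{-1} at the cone point $\iff$ each $\text{link}(v,K)$ is \cat{1}.

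The hard part is the cone theorem itself. One relates distances in $C_{-1}(Y)$ to distances in $Y$ via the hyperbolic law of cosines: for points $[\xi_i, t_i] \in C_{-1}(Y)$, the cone distance is a fixed function of the radii $t_i$ and of $\min(d_Y(\xi_1, \xi_2), \pi)$. Given a geodesic triangle in $C_{-1}(Y)$, one projects its vertices to $Y$, applies the \cat{1} hypothesis to produce a comparison triangle in $S^2$, and then takes the hyperbolic cone over this spherical triangle to obtain a comparison triangle in $\mathbb{H}^2$; a careful trigonometric argument shows this realizes the \cat{-1} comparison. Conversely, shrinking a small triangle in $C_{-1}(Y)$ towards the cone point recovers the \cat{1} inequality in $Y$ in the limit. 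Full details for this cone construction are worked out in Bridson--Haefliger, Chapter II.3.
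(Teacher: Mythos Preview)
The paper does not actually prove this theorem; it is quoted from \cite[Chapter~II.5]{bh}. Your sketch is an outline of the Bridson--Haefliger argument, and the core strategy---identify a small ball around each point with a $\kappa$-cone on the link, then invoke Berestovskii's theorem on $\kappa$-cones---is the right one.

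There is, however, a genuine error in your reduction step for non-vertex points. You assert that for $p$ in the relative interior of a proper face $\tau$, a small neighborhood ``splits metrically as a product of an open ball in $\mathbb{H}^{\dim\tau}$ with an open cone over $\text{link}(\tau,K)$.'' This is false for hyperbolic simplices: hyperbolic space admits no such local metric product decomposition along a totally geodesic subspace (that is a Euclidean phenomenon). The correct reduction takes place at the level of links rather than neighborhoods: $\text{link}(p,K)$ is isometric to the spherical join $S^{\dim\tau-1}*\text{link}(\tau,K)$, and one uses that spherical joins of \cat{1} spaces are \cat{1} (see \cite[II.3.15]{bh}). An induction on $\dim\tau$ then shows every link is \cat{1} once the vertex links are, and only then does the cone theorem apply at every point of $K$. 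Your outline of the cone theorem itself is fine.
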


\begin{remark}
	In the case that $K$ is 2-dimensional, links of vertices are topological graphs. These are \cat{1} precisely when they contain no closed geodesics of length $<2\pi$, making the link condition particularly easy to check in this case.
\end{remark}

\begin{definition}
	A piecewise hyperbolic simplicial complex is called a \emph{negatively curved complex} if it satisfies the Link Condition.
\end{definition}

To connect this to group theory, we make the following definition:

\begin{definition}
	A group is called \emph{\cat{-1}} if it acts properly discontinuously and cocompactly by isometries on a simply connected negatively curved complex. A group is called \emph{freely \cat{-1}} if it acts freely and geometrically on a simply connected negatively curved complex. The minimal dimension of such a complex is called the \emph{\cat{-1} dimension} of the group. 
\end{definition}

Note that a group is freely \cat{-1} if and only if it is the fundamental group of a negatively curved complex.

\subsection{Small cancellation conditions}

A good reference for classical small cancellation theory is \cite{ls77}, and we refer the reader there for full details. We only state here what is necessary for us to give our main theorem.

\begin{definition}
	Let $R = \{r_1 \gdots r_n\}$ be a set of cyclically reduced words on an alphabet $S \sqcup S^{-1}$, closed under taking cyclic permutations and inverses. A \emph{piece} in $R$ is a word $w$ which appears as an initial segment of at least two elements of $R$.
\end{definition}

\begin{definition} \label{def:uniformsmallcanc}
	Let $\mathcal{P}=\langle S \mid R \rangle$ be a presentation for a group $G$. Without loss of generality, assume $R$ is closed under taking cyclic permutations and inverses. We say $\mathcal{P}$ is $C'(1/6)$ of every piece in $R$ has length strictly less than $1/6$ of the length any relator in which it appears. Now let $g$ be the minimal length of any relator in $R$. We say $\mathcal{P}$ is \emph{uniformly $C'(\frac{1}{6})$} if every piece in $R$ has length strictly less than $g/6$, and moreover no element of $R$ is a proper power.
\end{definition}

\begin{remark}
	Groups which are $C'(\frac{1}{6})$ are torsion-free if and only if no relator is a proper power, and proper powers are forbidden by our uniform $C'(\frac{1}{6})$ condition. This torsion-freeness is necessary for our argument, since we produce a \emph{free} action on a \cat{-1} space, and all groups possessing such an action are torsion-free. However, we do not know whether the uniform small cancellation condition can be relaxed to the standard $C'(\frac{1}{6})$ condition in the torsion-free case.
\end{remark}

Our main theorem is the following:

\begin{theorem}\label{thm:smallcanccat-1}
	Let $G$ be a group with a uniformly $C'(1/6)$ presentation, $\langle S \mid R \rangle$. Then $G$ is \cat{-1} with \cat{-1} dimension 2.
\end{theorem}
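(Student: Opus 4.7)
The plan is to endow the presentation complex $X = X(\langle S \mid R \rangle)$ with a piecewise hyperbolic metric, then modify it by folding moves until it satisfies the Link Condition; its universal cover will then be the required 2-dimensional $G$-space. Here $X$ has a single vertex $v$, an edge for each generator, and a 2-cell $D_r$ for each cyclic class of relator, and the $G$-action on $\tilde X$ is geometric by construction.

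First I would realize each $D_r$ as a regular hyperbolic $|r|$-gon of common side length $\ell$, keeping $\ell$ as a free parameter; the interior angle $\alpha_n(\ell)$ at each corner of a regular hyperbolic $n$-gon is an explicit decreasing function of $\ell$ tending to $0$ as $\ell \to \infty$. Gluing along edge labels yields a piecewise hyperbolic 2-complex, and $\mathrm{link}(v)$ is a finite metric graph whose vertices are indexed by $S \sqcup S^{-1}$ and whose edges, of length $\alpha_{|r|}(\ell)$, are indexed by corners of the polygons. By the 2-dimensional version of the Link Condition noted in the excerpt, it suffices to show every embedded closed geodesic in this graph has length $\geq 2\pi$. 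A combinatorial loop of length $k$ in the link corresponds to a reduced angular diagram around $v$; Greendlinger's Lemma for $C'(1/6)$ forces such a diagram either to have large $k$ or to contain a face with most of its boundary exposed, and the uniform small cancellation condition in terms of $g$ then bounds $k$ from below.

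The remaining obstruction is short link loops arising from shared pieces: a piece $p$ of length $|p| < g/6$ produces a loop of combinatorial length roughly $|p|+1$ that no choice of $\ell$ alone can make long enough metrically, since shrinking the angles by enlarging $\ell$ also shrinks every other angle in the complex. To eliminate these loops I would apply folding moves: for each maximal piece $p$ shared between two 2-cells $D_{r_1}, D_{r_2}$, identify the two boundary arcs labelled $p$ and extend the identification a controlled distance into the interiors of the polygons along hyperbolic geodesics perpendicular to $p$. This preserves the fundamental group (each identified region is contractible) and, after iteration over all pieces, removes exactly the short link loops, at the cost of introducing new hyperbolic 2-cells of non-regular shape that must be controlled.

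After folding I would verify the Link Condition directly: using uniform $C'(1/6)$ together with the fact that pieces have now been absorbed, every surviving embedded closed geodesic in any vertex link corresponds to a diagram whose corner count forces its angular length to be at least $2\pi$ once $\ell$ is chosen large enough. The universal cover of the resulting complex is then a simply connected negatively curved 2-complex with a geometric $G$-action, giving \cat{-1} dimension at most 2; the presence of a non-trivial relator precludes $G$ acting freely on a tree, so the dimension is exactly 2. The main obstacle is the folding step: making the moves precise enough that the hyperbolic metric extends consistently across the folds, the folded complex still has fundamental group $G$, and all surviving link loops really do satisfy the $2\pi$ bound. The combinatorial room for this to succeed comes from uniform $C'(1/6)$, but the metric and combinatorial bookkeeping through successive folds is the delicate part of the argument.
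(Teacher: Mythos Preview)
Your outline has the right skeleton---metrize the 2-cells hyperbolically, fold along pieces, verify links---but two specific choices would prevent the argument from closing.

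First, metrizing each $D_r$ as a \emph{regular} hyperbolic $|r|$-gon of common side length is the wrong shape: with this choice the corner angles, and hence the isometry types of the regions you want to fold, depend on $|r|$. The paper instead builds each $D_{r_i}$ as a \emph{singular} polygon $D(g_i, g, r)$ assembled from $g_i$ copies of a fixed isosceles triangle $T(g,r)$ (the triangle from the regular $g$-gon, $g$ the shortest relator length). This guarantees that a segment subtended by a length-$k$ diagonal is isometric across all 2-cells, so the fold identifying two segments along a piece is a genuine isometry. Your ``extend the identification a controlled distance into the interiors along perpendicular geodesics'' cannot be made isometric between regular polygons of different sizes, and it is exactly here that the argument would break.

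Second, you send $\ell \to \infty$ to make corner angles small, but the proof needs the opposite regime. The engine is a purely Euclidean fact (Lemma~\ref{lemma:ngonangles}): in a regular $n$-gon, any two diagonals of length $\le \lfloor (n-1)/6 \rfloor$ meet at internal angle $>2\pi/3$. One then takes the hyperbolic radius \emph{small} so this inequality persists (Proposition~\ref{prop:hyperboliclikeeuclidean}). After folding whole segments along such diagonals, every surviving link loop at a $1$-skeleton vertex must traverse at least three ``central'' arcs each of length $>2\pi/3$, giving $>2\pi$; Greendlinger's Lemma plays no role. You also omit the second class of vertices---interior points of discs that become identified under folding---whose links require a separate argument (again driven by the $2\pi/3$ bound) that $C_1\cup\cdots\cup C_k$ contains no short geodesic loop.
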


\begin{remark}
	Random groups in the density model, for density $< 1/12$, satisfy the ordinary (non-uniform) $C'(1/6)$ condition \cite{gromov93}. Since they have all relations of equal length, they satisfy the uniform $C'(1/6)$ condition too. This provides an assurance that the uniform condition is not too much of a restriction; indeed we obtain the following immediate corollary of Theorem \ref{thm:smallcanccat-1}.
\end{remark}

\begin{corollary}\label{cor:randomgroupscat-1}
	Random groups in the density model, at density $<1/12$, are \cat{-1} with \cat{-1} dimension 2.
\end{corollary}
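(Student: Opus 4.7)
The plan is to deduce the corollary directly from Theorem \ref{thm:smallcanccat-1} by verifying that, with overwhelming probability, a random presentation in the density model at density $d<1/12$ satisfies the uniform $C'(1/6)$ condition of Definition \ref{def:uniformsmallcanc}.

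First I would recall the setup of the density model: fix a generating set $S$, a density $d\in(0,1)$, and a length parameter $\ell$, and choose a random set of $(2|S|-1)^{d\ell}$ cyclically reduced words of length $\ell$ as the relator set $R$. A property is said to hold for random groups at density $d$ if its probability tends to $1$ as $\ell\to\infty$. The decisive feature is that, by construction, every relator in $R$ has the same length $\ell$.

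Next I would invoke Gromov's theorem \cite{gromov93} (also cited in the remark immediately preceding the corollary) that at density $d<1/12$ the resulting presentation satisfies the ordinary $C'(1/6)$ condition with overwhelming probability. Because all relators have length $\ell$, the minimal relator length $g$ appearing in Definition \ref{def:uniformsmallcanc} equals $\ell$, so the bound ``every piece has length less than $1/6$ of the length of any relator containing it'' is literally the same as the uniform bound ``every piece has length less than $g/6$.'' Thus for random presentations in the density model, the ordinary and uniform $C'(1/6)$ conditions coincide.

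It remains to rule out proper powers. A standard counting argument shows that at any positive density $d<1/2$, the probability that one of the randomly chosen length-$\ell$ words is a proper power decays exponentially in $\ell$, so w.o.p. no relator in $R$ is a proper power. Combining this with the previous paragraph gives a uniformly $C'(1/6)$ presentation w.o.p. Applying Theorem \ref{thm:smallcanccat-1} then yields that such a random group is \cat{-1} of \cat{-1} dimension $2$. The only non-routine ingredient is Gromov's density-model small cancellation estimate, which is quoted from the literature; everything else is a direct bookkeeping translation between the ordinary and uniform versions of $C'(1/6)$.
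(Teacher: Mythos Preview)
Your argument is correct and is essentially the same as the paper's: the remark preceding the corollary already observes that random groups at density $<1/12$ are $C'(1/6)$ \cite{gromov93} and that equal relator lengths upgrade this to the uniform condition, making the corollary immediate from Theorem~\ref{thm:smallcanccat-1}. Your explicit verification that proper powers are w.o.p.\ absent is a welcome addition, since Definition~\ref{def:uniformsmallcanc} requires this and the paper's remark passes over it in silence.
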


\section{Proof of the main theorem}

\subsection{Geometry of regular polygons}

The first part of our argument relies on choosing suitable metrics on the 2-cells in the presentation complex. These metrics are based on small regular hyperbolic polygons, however since sufficiently small hyperbolic polygons closely resemble Euclidean polygons, we will argue in the Euclidean case for technical simplicity. Proposition \ref{prop:hyperboliclikeeuclidean} makes explicit the conversion to a hyperbolic metric.

We first establish some terminology about such polygons.

\begin{definition}
	Let $P$ be a regular (hyperbolic or Euclidean polygon). A \emph{diagonal} is a geodesic connecting two (possibly consecutive) vertices of $P$. A \emph{segment} of $P$ is the smaller of the two pieces obtained by cutting $P$ along a diagonal (in the case where the diagonal is an edge of $P$, the segment is also this single edge). The diagonal is said to \emph{subtend} the corresponding segment. The \emph{length} of the segment is the number of edges of $P$ it contains, and the \emph{length} of the diagonal is the length of the corresponding segment. The \emph{radius} of a regular polygon is the distance from the centre to any boundary vertex. 
\end{definition}

\begin{definition}
	Let $P$ be an $n$-gon, and let $d_1$, $d_2$ be two diagonals of length less than $n/2$. Suppose $d_1$ and $d_2$ intersect at a point $p$. Precisely one of the connected components of $P - (d_1 \cup d_2)$ contains the centre of $P$, and the angle at $p$ which is inside this component is called the \emph{internal angle} between $d_1$ and $d_2$.
\end{definition}

\begin{lemma}\label{lemma:ngonangles}
	Consider two intersecting diagonals in a regular Euclidean $n+1$-gon, each of length at most $\lfloor n/6 \rfloor$. The minimal internal angle between such diagonals is $>2\pi/3$.
\end{lemma}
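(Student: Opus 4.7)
I would inscribe the regular $(n+1)$-gon in its circumscribed circle with centre $O$, label the endpoints of $d_1$ as $A,B$ and of $d_2$ as $C,D$, and note that because $d_1,d_2$ cross at an interior point $P$, their endpoints interleave on the circle; without loss of generality the cyclic order is $A,C,B,D$. Writing $\alpha,\beta,\gamma,\delta$ for the four consecutive arcs $\widehat{AC},\widehat{CB},\widehat{BD},\widehat{DA}$ (so $\alpha+\beta+\gamma+\delta = 2\pi$), each edge of the polygon subtends a central arc of $2\pi/(n+1)$, so a diagonal whose segment has length at most $\lfloor n/6\rfloor$ subtends an arc of at most $2\pi\lfloor n/6\rfloor/(n+1) < \pi/3$ on its short side (strict because $6\lfloor n/6\rfloor\le n<n+1$). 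Hence for each of $d_1$ and $d_2$, one of its two arcs is strictly less than $\pi/3$.

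The intersecting chords angle theorem now identifies the two supplementary angles at $P$ as $\tfrac12(\alpha+\gamma)$ and $\tfrac12(\beta+\delta)$. To pick out the internal one, I would use the fact that the centre $O$ always lies on the major-arc side of a chord whose short arc is below $\pi$, and then split into four cases according to which of the two arcs is short for $d_1$ and which is short for $d_2$. In the representative case $\alpha+\beta<\pi/3$ and $\beta+\gamma<\pi/3$, $O$ lies on the $D$-side of $d_1$ and the $A$-side of $d_2$, so the internal angle at $P$ is $\angle DPA = \tfrac12(\beta+\delta)$; adding the two short-arc inequalities yields $\alpha+2\beta+\gamma<2\pi/3$, hence $\alpha+\gamma<2\pi/3$, whence
\[
\tfrac12(\beta+\delta) \;=\; \pi - \tfrac12(\alpha+\gamma) \;>\; 2\pi/3.
\]
Each of the other three cases reduces to the same algebra: add the two ``short $<\pi/3$'' inequalities to conclude that a sum of two ``opposite'' arcs is $<2\pi/3$, and deduce that the internal angle exceeds $2\pi/3$.

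The only subtle point is the bookkeeping — correctly identifying in each of the four short-side combinations which vertex of the polygon lies on the $O$-side of each chord, and thus which pair of arcs enters the formula for the internal angle. Once the cyclic labelling convention is fixed, this is routine and symmetric across the four cases, so the real content of the lemma is the one-line inequality illustrated in the representative case above. A minor sanity check I would also carry out is that the strict inequality $6\lfloor n/6\rfloor<n+1$ never degenerates, which it does not (it holds for all $n\ge 0$), so $\pi/3$ is never attained even when $n$ is a multiple of $6$.
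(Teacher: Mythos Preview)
Your argument via the intersecting chords theorem is correct and is a genuinely different route from the paper's. The paper first argues by monotonicity that the minimal internal angle is realised in the extremal configuration where the two diagonals share an endpoint and both have maximal length $\lfloor n/6\rfloor$; it then observes that six such diagonals laid consecutively fail to close up into a hexagon, forcing each internal angle to exceed $2\pi/3$. Your approach bypasses the reduction step entirely: by expressing the two angles at $P$ as $\tfrac12(\alpha+\gamma)$ and $\tfrac12(\beta+\delta)$ and bounding the short arcs by $\pi/3$, you get the inequality directly in every configuration. This is arguably cleaner, since the paper's monotonicity claims (``clearly decreases the internal angle'') are left to the reader, whereas your computation is fully explicit once the four short-side cases are checked. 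The trade-off is that the paper's picture makes the reason for $2\pi/3$ geometrically transparent (six pieces don't fit), while your argument hides this in the algebra.

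One small omission: your opening sentence assumes $d_1$ and $d_2$ cross at an \emph{interior} point, but the lemma as stated (and as used later in the paper) also covers diagonals sharing a boundary vertex --- indeed the paper's extremal case is exactly this. Your formulae still apply verbatim in that situation by allowing one of $\alpha,\beta,\gamma,\delta$ to vanish (the intersecting chords theorem degenerates to the inscribed angle theorem), so the fix is a one-line remark rather than a new argument.
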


\begin{proof}
	Clearly, the case realising the minimal internal angle is where the two diagonals share an endpoint and are of the maximal permitted length. To see this, take any other intersection of diagonals $d_1$, $d_2$, where $d_i$ has endpoints $v_i$ and $w_i$. Without loss of generality, the clockwise order of the endpoints around the boundary of the polygon is $v_1$, $v_2$, $w_1$, $w_2$. If $d_1$ is not of maximal length, then increase its length by keeping $w_1$ fixed and moving $v_1$ anticlockwise. This clearly decreases the internal angle between $d_1$ and $d_2$. Similarly, increase the length of $d_2$ by fixing $v_2$ and moving $w_2$ clockwise; this also decreases the angle. Finally, fix $d_1$ and rotate $d_2$ by moving both $v_2$ and $w_2$ clockwise, until $v_2$ coincides with $w_1$. This process also decreases angle, and we have arrived at the extremal case.
	
	Since six consecutive maximal length diagonals fail to complete a hexagon, the internal angle between each pair must be $>2\pi/3$. See Figure \ref{fig:tikz19gon1}.
\end{proof}

\begin{figure}[H]
	\centering
\begin{tikzpicture}[scale=0.75]
    \draw[thick, black] \foreach \x in {0,1, ..., 21} {(\x*360/19:4cm) coordinate(\x) {}};
	\draw[thick, black] (0) \foreach \x in {1, ..., 19} { -- (\x){}};	

	\draw[black!70] (1) \foreach \x in {3,5,...,21} { -- (\x)};	
	\draw[black!70] (0) \foreach \x in {2,4,...,20} { -- (\x)};	
	\draw[black!70] (0) \foreach \x in {3,6,...,21} { -- (\x)};		
	\draw[black!70] (2) \foreach \x in {5,8,...,20} { -- (\x)};
		\tkzMarkAngle[arc=l,size=0.6cm,color=green,fill=blue, opacity = .5](7,4,1);
	\draw[thick, red] (1) -- (4) -- (7) -- (10) -- (13) -- (16) -- (19);
	\node[below, yshift = -12, xshift = -4] at (4) {$>2\pi/3$};
\end{tikzpicture}
	\caption{The extremal angle between diagonals of length $\leq3$ in a regular 19-gon. No internal angle between diagonals in the picture is less than the highlighted angle.}
	\label{fig:tikz19gon1}
\end{figure}
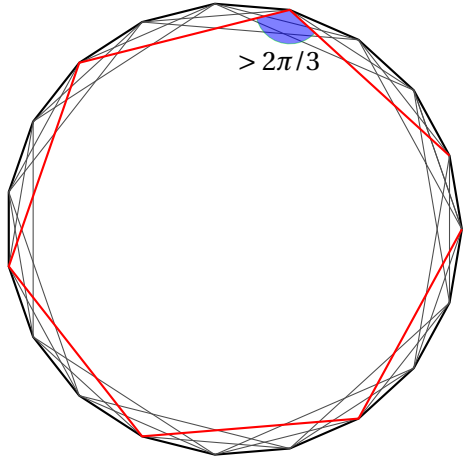

\begin{remark}\label{rmk:hyperboliclikeeuclidean}
	Lemma \ref{lemma:ngonangles} also applies to sufficiently small regular hyperbolic polygons, since the metric differs from a Euclidean metric by an arbitrarily small amount. The following proposition makes this more precise:
\end{remark}

\begin{proposition}\label{prop:hyperboliclikeeuclidean}
	For each integer $n>6$, there exists $r>0$ such that Lemma \ref{lemma:ngonangles} also holds for regular hyperbolic $n$-gons of radius $<r$.
\end{proposition}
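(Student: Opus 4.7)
The plan is to argue by continuity, using the fact that very small regions of hyperbolic space are nearly Euclidean. Fix $n>6$. For a regular hyperbolic $n$-gon of radius $r$, every pair of intersecting diagonals $(d_1,d_2)$ of length at most $\lfloor (n-1)/6 \rfloor$ can be described by a finite amount of combinatorial data: the endpoints of $d_1$ and $d_2$ on the boundary, up to the dihedral symmetry of the polygon. Crucially, this combinatorial data does not depend on $r$, so the total number of configurations to check is finite.

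For each such combinatorial configuration $C$, let $\theta_C(r)$ denote the internal angle between the two diagonals in the regular hyperbolic $n$-gon of radius $r$, and let $\theta_C(0)$ denote the internal angle in the corresponding Euclidean configuration. The first step would be to verify that $\theta_C(r)$ is a continuous function of $r$ at $r=0$. One clean way is to work in the Klein projective model of $\mathbb{H}^2$: a regular hyperbolic $n$-gon of radius $r$, centred at the origin, has the same vertex positions as the Euclidean regular $n$-gon of radius $\rho(r)$ for some $\rho$, and its diagonals are straight line segments in the model. Since the hyperbolic metric tensor at the origin differs from the Euclidean one by a factor tending to $1$ as $r\to 0$, angles measured at any intersection point inside the shrinking polygon converge uniformly to the Euclidean angles as $r \to 0$. (Alternatively one can use the exponential map at the centre, which is a radial isometry whose differential approaches an isometry as the radius shrinks.)

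Given continuity, the rest is immediate. By Lemma \ref{lemma:ngonangles} applied to the Euclidean $n$-gon, we have $\theta_C(0) > 2\pi/3$ for every configuration $C$. Since there are finitely many configurations, set
\[
\delta \;=\; \min_C \theta_C(0) \;-\; \frac{2\pi}{3} \;>\; 0.
\]
For each $C$, continuity supplies some $r_C>0$ such that $|\theta_C(r)-\theta_C(0)|<\delta$ whenever $r<r_C$. Taking $r = \min_C r_C > 0$ gives $\theta_C(r') > 2\pi/3$ for every configuration and every $r'<r$, which is precisely the statement of Lemma \ref{lemma:ngonangles} in the hyperbolic setting.

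The main obstacle is the continuity statement in the first step: one has to be careful that the intersection point of $d_1$ and $d_2$, as well as the tangent directions at that point, all vary continuously with $r$, and that the hyperbolic angle they span also varies continuously. This is not deep but does require picking a concrete model of $\mathbb{H}^2$ in which both the computation of intersection points and the comparison of hyperbolic and Euclidean angles can be made uniform across the finitely many configurations; beyond that the argument is essentially a compactness-and-continuity wrap-up of the Euclidean lemma.
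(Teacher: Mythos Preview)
Your argument is correct, but it takes a different route from the paper's. You argue by continuity and compactness: there are only finitely many combinatorial configurations of diagonals, the internal angle in each configuration varies continuously with the radius and limits to the Euclidean value as $r\to 0$, and Lemma~\ref{lemma:ngonangles} gives a strict inequality in the Euclidean case, so a uniform $r$ exists. The paper instead first reduces (as in the proof of Lemma~\ref{lemma:ngonangles}) to the single extremal configuration of two maximal-length diagonals sharing an endpoint, and then applies the second hyperbolic law of cosines to the right triangle formed by the centre, an endpoint, and the midpoint of one diagonal. This yields an \emph{explicit} bound
\[
r \;<\; \cosh^{-1}\!\left(\tfrac{1}{\sqrt{3}}\cot\!\left(\tfrac{m\pi}{6m+1}\right)\right),
\]
where $m=\lfloor (n-1)/6\rfloor$ is the maximal diagonal length. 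Your approach is cleaner as a pure existence proof and sidesteps any hyperbolic trigonometry, but it is non-constructive; the paper's computation is what allows the later volume estimate in Remark~\ref{rmk:smallcanccat-1volume}, so the explicit bound is not merely cosmetic.
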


\begin{proof}
	It is enough to show that, for any $n \geq 1$ and $s \geq 1$ we may pick $r$ small enough that the angle between two diagonals of length $n$, with a common endpoint, in a regular hyperbolic $(6n+s)$-gon of radius $r$, is $>2\pi/3$ (that is, the picture in Figure \ref{fig:tikz19gon1} still applies). Clearly, the extremal case is $s=1$. So, consider the right-angled triangle with one vertex at the centre $o$ of such a polygon, one vertex at the endpoint of a diagonal of length $n$, and one vertex at the midpoint of this diagonal. Denote by $\alpha$ and $\theta$ the two non-right angles in this triangle, as in Figure \ref{fig:tikzhypeucpropproof}. Our goal is to calculate the range of values of $r$ such that $\theta > \pi/3$. %
	\begin{figure}[h]
		\centering 
\begin{tikzpicture}
	\clip (-3,-1) rectangle (2,4);
    \draw[thick, black] \foreach \x in {0,1, ..., 21} {(\x*360/19:4cm) coordinate(\x) {}};

	\draw[thick, black] (0) \foreach \x in {1, ..., 19} { to[bend left=5] (\x){}};	

	\draw[very thin, red] (4) to[bend left=5] coordinate[pos=0.5](m) (7);
	\coordinate (o) at (0:0);
	\fill[black, opacity=0.1] (o) -- (4) to[bend left = 3] (m);
	\draw[thick, black] (0:0) to node[pos=0.5, right] {$r$} (4);
	\draw[thick, black] (0:0) -- (7);
	\draw[thick, black] (0:0) -- (m);
	\tkzMarkRightAngle (o,m,4);
	\tkzMarkAngle[arc=l, size=17pt, color=black, fill=black, opacity=.3](4,o,m);
	\tkzMarkAngle[arc=l, size=17pt, color=black, fill=black, opacity=.3](m,4,o);
	\draw[thick, red] (1) to[bend left=5]  (4) to[bend left=5] coordinate[pos=0.5](m) (7) to[bend left=5]  (10) to[bend left=5] (13) to[bend left=5] (16) to[bend left=5] (19);
	\node[below left, inner sep = 14pt] at (4) {$\theta$};
	\node[above, inner sep = 20pt] at (o) {$\alpha$};
	
\end{tikzpicture}
		\caption{A right-angled triangle in a regular hyperbolic $6n+1$-gon (for the case $n=3$.)}
		\label{fig:tikzhypeucpropproof} 
	\end{figure}
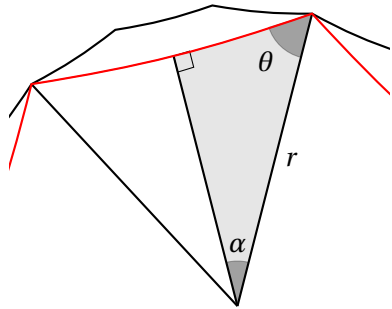	
	
	Since this triangle is obtained by bisecting the isosceles triangle with angle $2\pi \times \frac{n}{6n+1}$, we see that $\alpha = \frac{n\pi}{6n+1}$. It then follows from the second hyperbolic cosine rule that: \[
	\cosh(r) = \cot(\theta)\cot \left( \frac{n\pi}{6n+1} \right).
	\]
	
	We would like to find $r$ such that $\theta>\pi/3$; equivalently, $\cot(\theta) < 1/\sqrt{3}$. Hence: \begin{align*}
	\frac{\cosh(r)}{\cot(\frac{n\pi}{6n+1})}&<1/\sqrt{3} \\
	r&<\cosh^{-1} \left( \frac{1}{\sqrt{3}} \cot \left(\frac{n\pi}{6n+1} \right)    \right)
	\end{align*}
\end{proof}


Denote the right hand side of the above inequality by $r_{\text{max}}(n)$. This decreases fairly slowly with $n$. For illustration: $r_{\text{max}}(1) \approx 0.62$, $r_{\text{max}}(10) \approx 0.20$, $r_{\text{max}}(100) \approx 0.06$ and $r_{\text{max}}(1000) \approx 0.02$.

\subsection{Geometry of singular polygons}

A regular hyperbolic or Euclidean $n$-gon, with radius $r$, can be regarded as a simplicial complex with a vertex $o$ at the centre, and $n$ isometric isosceles 2-simplices with two sides of length $r$ and smallest angle $2\pi/n$ identified in a cycle around $o$. Denote this isosceles triangle by $T(n,r)$. 

	
For any integer $m$, we may obtain a singular 2-complex structure on a disc by identifying $m$ copies of $T(n,r)$ in the analogous manner. For $m<n$, the central vertex $o$ has local positive curvature: that is, its link is a loop of length $<2\pi$. For $m=n$, this is the usual regular $n$-gon, and for $m>n$, the central vertex $o$ has local negative curvature: the link has length $>2\pi$. We denote this singular disc by $D(m,n,r)$. See Figure \ref{fig:singularngon}.

\begin{figure}[hp]
	\centering
\begin{tikzpicture}
	\filldraw (0,0.5) circle (0pt) coordinate (o);
	\draw \foreach \x in {0,1,2,3,4,5,6,7} {
	(\x*45+0.15*45:4cm and 2cm) coordinate (\x) {} 
	(0,1) + (\x*45+0.65*45:4cm and 2cm) coordinate (\x1) {} };
	\draw (0) \foreach \x in {0,...,7} {-- (\x) -- (\x1)} -- cycle (0);
	\draw \foreach \x in {0,...,7} {(o)-- (\x) -- (\x1)};
	\draw \foreach \x in {0,...,7} {(o)-- (\x1)};
	\draw[fill=white, opacity=0.75] (o) -- (7) -- (71) -- cycle;
	\draw[fill=white, opacity=0.75] (o) -- (31) -- (4) -- cycle;
	\draw[fill=white, opacity=0.75] (o) -- (41) -- (5) -- cycle;
	\draw (4) -- (41);
\end{tikzpicture}
	\caption{A singular 16-gon. The 2-simplices are all isometric and isosceles, with angle (say) $\pi/6$ at the centre and radius $r$, so this is a picture of $D(16, 12, r)$.}
	\label{fig:singularngon}
\end{figure}
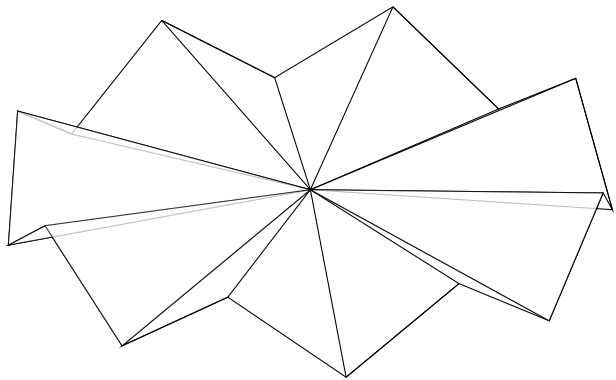

\begin{remark}
	For any $k<n/2$, we may define a segment of length $k$ in $D(m,n,r)$ in exactly the same way as for the regular $n$-gon $D(n,n,r)$. The isometry type of such a segment depends on $n$, $r$, and the underlying metric (i.e. hyperbolic or euclidean), but it does not depend on $m$. Moreover, Lemma \ref{lemma:ngonangles} still holds. See Figure \ref{fig:singularngonsegment}.
\end{remark}

\begin{figure}[hp]
	\centering	
\begin{tikzpicture}[scale=0.9]
	\filldraw (0,0.5) circle (0pt) coordinate (o);
	\draw \foreach \x in {0,1,2,3,4,5,6} {
	(-45+\x*43.5+0.1*43.5:4cm and 2cm) coordinate (\x) {}
	(0,1) + (-45+\x*43.5+0.6*43.5:4cm and 2cm) coordinate (\x1) {} };
	\draw (0) \foreach \x in {0,...,5} {-- (\x) -- (\x1)} -- (6);
	\draw \foreach \x in {0,...,5} {(o)-- (\x) -- (\x1)};
	\draw (o) -- (6);
	\draw (-45+6*43.5+0.1*43.5+360/11:4cm and 2cm) coordinate (A);
	\draw (-45+6*43.5+0.1*43.5+720/11:4cm and 2cm) coordinate (B);
	\%draw (-45+6*43.5+0.1*43.5+1080/11:4cm and 2cm) circle (1pt) coordinate (C);
	\draw (6) -- (A) -- (B) -- (0);
	\draw[fill = blue!20] (6) -- (A) -- (B) -- (0) -- cycle;
	\draw (o) -- (A);
	\draw (o) -- (B);
	\draw \foreach \x in {0,...,5} {(o)-- (\x1)};
	\draw[fill=white, opacity=0.75] (o) -- (41) -- (5) -- cycle;
	\draw[fill=white, opacity=0.75] (o) -- (51) -- (6) -- cycle;
	\draw[fill=white, opacity=0.75] (o) -- (1) -- (11) -- cycle;
	\draw[fill=white, opacity=0.75] (o) -- (0) -- (01) -- cycle;
	\draw (5) -- (51);
	\draw (1) -- (01);

	
\end{tikzpicture}
	\caption{A segment in a singular $D(m,n,r)$ is isometric to a segment of the same length in $D(n,n,r)$.}
	\label{fig:singularngonsegment}
\end{figure}
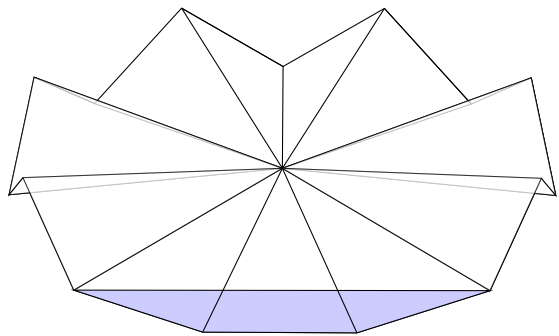
 
\subsection{Proof of the main theorem}

\begin{proof}[Proof of Theorem \ref{thm:smallcanccat-1}]
	By Definition \ref{def:uniformsmallcanc} no relator from $R$ is a proper power. Also assume, without loss of generality, that the relators are distinct up to cyclic permutation and inverses (if not, we can simply delete relators until this is the case). The small cancellation condition now reduces to the intuitive condition that, in the disjoint set of labelled cycles corresponding to the relators, the maximum length of any labelled path which appears at least twice is $<\frac{g}{6}$, where $g$ is the minimum length of any relator.
	
	Denote by $X$ the presentation complex corresponding to $\langle S \mid R \rangle$. This consists of a bouquet of circles $B$, labelled by $S$, and for each relator $r_i$ a disc $D_i$ whose boundary is attached to the path labelled by $r_i$. We equip $X$ with the following metric. Applying Proposition \ref{prop:hyperboliclikeeuclidean}, we may choose $r$ such that Lemma \ref{lemma:ngonangles} holds for the regular hyperbolic $g$-gon of radius $r$. For each $i \geq 1$, let $g_i=|r_i|$, and metrize each disc projecting to $D_i$ by a singular hyperbolic $g_i$-gon $D(g_i, g, r)$. Since the boundary of $D(g_i, g, r)$ is a $g_i$-cycle with all edges of length some constant $\lambda$, the metrics on each disc induce a well-defined metric on $X$. Moreover, by equipping each disc with the simplicial complex structure depicted in Figure \ref{fig:singularngon}, we get a simplicial complex structure on $X$. Denote by $Y=\widetilde{X}$ the universal cover of $X$ with the induced metric, and by $Z=\widetilde{B}$ the preimage in $\widetilde{X}$ of the bouquet $B$.
	
	A piece from the presentation corresponds to a maximal path in $Z$ which is contained in the boundary of two distinct discs (either two distinct lifts of the same $D_i$, or lifts of two different $D_i$). We refer to such paths also as ``pieces''. Each such piece subtends a segment of each these two discs, and these two segments are isometric by construction. Therefore, we may subdivide to make the segments into simplicial subcomplexes, and pass to a quotient space in which the two segments are identified. We will refer to such an identification as a ``fold''. 
	Note also that, after such an identification, the corresponding boundary relators still both bound well-defined, isometric discs; in particular, we can safely apply the same operation again to other segments whether or not they intersect the pair already folded. Of course, we must again subdivide $Y$ if we wish to retain a simplicial complex structure.
	
	So, let $\overline{Y}$ denote the quotient space obtained by identifying the corresponding pair of segments for every piece in $Y$. Denote by $\overline{Z}$ the image of $Z$ under this map. We claim that $\overline{Y}$ is a \cat{-1} space with a geometric action of $G$.
	

	Firstly, although we identify infinitely many pieces, $Y$ is a locally finite complex and there is an upper bound on the length of pieces. Thus there are only finitely many pieces containing each vertex of $Z$, and so $\overline{Y}$ is a well defined, locally finite, piecewise hyperbolic simplicial 2-complex. It is therefore sufficient to check the link condition on vertices. We will ignore vertices of degree 2 in the below analysis, since these are only artefacts of the subdivision process and do not affect the metric on links.
	
	There are two types of vertex to consider: those which are images of vertices in $Z$, and those which are not; i.e.\ images of points in the interior of discs of $Y$. So first, let $v'$ be a vertex in the image of $Z$, and let $v$ be the vertex of $Z$ mapping to $v$. This is unique, since vertices in $Z$ are never identified under $Y \rightarrow \overline{Y}$. 
	
	Topologically, $\text{link}_{Y}(v)$ is a graph with two vertices $s^{\pm}$ for each $s \in S \sqcup S^{-1}$, and an edge $s^{-}t^{+}$ whenever $st$ appears as a subword in any relator. These edges all have the same length, equal to the interior angles in the discs. There are additional vertices corresponding to the simplicial subdivision of the discs, but these are degree 2 and we ignore them.
	
	The quotient $Y \rightarrow \overline{Y}$ induces a map $\text{link}_{Y}(v) \rightarrow \text{link}_{\overline{Y}}(v')$, and we can describe this map very precisely. For each pair $st$ which appears as a subword of some piece (or whose inverse appears), there are at least two edges connecting $s^{-}$ and $t^{+}$; one edge for each relator in which $st$ or $t^{-1}s^{-1}$ appears. In $\text{link}_{\overline{Y}}(v')$, all of these edges are identified---folded---to a single edge. Moreover, any such occurence of multiple edges joining a pair of vertices corresponds to a subword of a piece, and so are all identified to a single edge in $\text{link}_{\overline{Y}}(v')$.
	
	The other operation performed by the map $\text{link}_{Y}(v) \rightarrow \text{link}_{\overline{Y}}(v')$ corresponds to the case where $v$ is the initial or final vertex of a piece in $Z$. Suppose there is a piece $p$ ending with $s$, and let $t_1 \gdots t_k$ be the set of all generators which occur as the subsequent letter to $p$ among the relators in which the piece appears. Then, for each $i$, there is an edge in $\text{link}_{Y}(v)$ from $s^{-}$ to $t_i^{+}$, and the map $\text{link}_{Y}(v) \rightarrow \text{link}_{\overline{Y}}(v')$ is a fold which has the effect of identifying a short initial subpath of each of these edges. This second operation is a homotopy equivalence; in particular, any unbased loop in $\text{link}_{\overline{Y}}(v')$ has a preimage (up to homotopy equivalence) in the graph obtained by identifying multiple edges in $\text{link}_{Y}(v)$. Lemma \ref{lemma:ngonangles} implies that, even if initial and final subpaths have been identified in this way, the length of the central path is still $>2\pi/3$. 
	
	We may therefore express the map from $\text{link}_{Y}(v) \rightarrow \text{link}_{\overline{Y}}(v')$ as a composition of two folding maps, one performing the identification of multiple edges, and one performing the identification of short initial segments of edges, as shown in Figure \ref{fig:link1}. The intermediate graph contains no bigons, and since the second map is a homotopy equivalence, all loops in the final graph must contain at least three unidentified central subpaths of edges, and thus have length $>2\pi$ as required.

	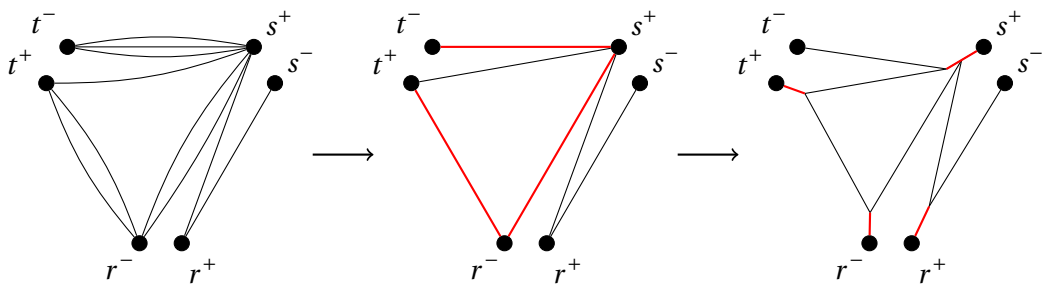
\begin{figure}[h]
		\centering	
\begin{tikzpicture}[every node/.style = {inner sep = 2pt, draw, fill=black, circle}, scale = 0.8]
	\begin{scope}[name = scope1]
	\node[label = 20:$s^-$] (s-) at (20:2cm) {};	
	\node[label = 40:$s^+$] (s+) at (40:2cm) {};	
	\node[label = 140:$t^-$] (t-) at (140:2cm) {};	
	\node[label = 160:$t^+$] (t+) at (160:2cm) {};
	\node[label = 260:$r^-$] (r-) at (260:2cm) {};	
	\node[label = 280:$r^+$] (r+) at (280:2cm) {};
	\draw[] (r-) to[bend left=10] (s+);
	\draw[] (r-) to[bend left=-5] (s+);
	\draw[] (r+) to[bend right=0] (s-);	
	\draw[] (r+) to[bend right=0] (s+);
	\draw[] (r-) to[bend left = 10] (t+);
	\draw[] (r-) to[bend right = 10] (t+);	
	\draw[] (t-) to[bend right = 10] (s+);	
	\draw[] (t-) to[bend right = 0] (s+);
	\draw[] (t-) to[bend right = -10] (s+);
	\draw[] (t+) to[bend right = 10] (s+);	
	\draw[thick, ->] (2.5,-0.5) to (3.5,-0.5);
	\end{scope}
	
	\begin{scope}[xshift = 6cm, name = scope2]
	\node[label = 20:$s^-$] (s-) at (20:2cm) {};	
	\node[label = 40:$s^+$] (s+) at (40:2cm) {};	
	\node[label = 140:$t^-$] (t-) at (140:2cm) {};	
	\node[label = 160:$t^+$] (t+) at (160:2cm) {};
	\node[label = 260:$r^-$] (r-) at (260:2cm) {};	
	\node[label = 280:$r^+$] (r+) at (280:2cm) {};
	\draw[thick, red] (r-) to[] (s+);
	\draw[] (r+) to[bend right=0] (s-);	
	\draw[] (r+) to[bend right=0] (s+);
	\draw[thick, red] (r-) to[] (t+);
	\draw[thick, red] (t-) to[bend right = 0] (s+);
	\draw[] (t+) to[bend right = 0] (s+);	
	\draw[thick, ->] (2.5,-0.5) to (3.5,-0.5);
	\end{scope}
	
	\begin{scope}[xshift = 12cm], name = scope3]
	\node[label = 20:$s^-$] (s-) at (20:2cm) {};
	\node[label = 40:$s^+$] (s+) at (40:2cm) {};	
	\node[label = 140:$t^-$] (t-) at (140:2cm) {};	
	\node[label = 160:$t^+$] (t+) at (160:2cm) {};
	\node[label = 260:$r^-$] (r-) at (260:2cm) {};	
	\node[label = 280:$r^+$] (r+) at (280:2cm) {};	
	\coordinate (tinner) at (160:1.5cm);	
	\coordinate (rinner) at (257:1.5cm);
	\coordinate (r+inner) at (295:1.5cm);
	\coordinate (s+inner) at (45:1.3cm);	
	\draw[] (s+inner) -- (s+) coordinate[midway](s+inner2);
	\draw[] (rinner) to (s+inner2);
	\draw[] (r+inner) to (s+inner2);
	\draw[] (r+inner) to (s-);	
	\draw[] (rinner) to (tinner);
	\draw[] (t-) to (s+inner);
	\draw[] (tinner) to (s+inner);
	\draw[thick, red] (s+inner) to (s+);
	\draw[thick, red] (r+inner) to (r+);
	\draw[thick, red] (tinner) to (t+);
	\draw[thick, red] (rinner) to (r-);
	\end{scope}
\end{tikzpicture}
		\caption{A planar example of the map on links of vertices of the first type. Thick red paths show regions where the map just applied was non-injective.}
		\label{fig:link1}
	\end{figure}


	We now address the second type of vertex in $\overline{Y}$. Let $v' \in \overline{Y}$ be a vertex which is the image under $Y \rightarrow \overline{Y}$ of a point in $Y - Z$. If $v'$ is the image of one of the singular points in the centre of a disc, then its link is a circle of length $\geq 2\pi$. This is because no segments intersect this point, so it is unaltered by the map $Y \rightarrow \overline{Y}$. Hence we may assume $v'$ is not of this type.

	
	Let $v_1 \gdots v_k$ be the set of all points in $Y$ mapping to $v_i$; note that this is indeed finite by local finiteness of $Y$. For each $i$, let $D_i$ be the disc in $Y$ containing $v_i$ in its interior. The link $L=\text{link}_{\overline{Y}}(v')$ is a quotient of a disjoint union of $k$ round circles $C_1 \sqcup \dots \sqcup C_k$.
	
	 The quotient map $\phi \colon C_1 \sqcup \dots \sqcup C_k \rightarrow L$ is induced by the process of identifying segments in $Y$. If such a segment contains one of the vertices $v_i$, then either $v_i$ is contained in the diagonal bounding the segment or in its interior. The map is therefore a multiple composition of two possible operations: identifications of subarcs of length $\pi$ between the $C_i$, or identifications of complete circles $C_i$. The latter operation does not affect $L$ and so we can assume that it does not occur. We now show that $L$ cannot contain any closed geodesics of length $<2\pi$. This is trivial in the case $k=1$, so assume $k \geq 2$.
	 
	 The map $\phi$ consists of repeatedly identifying length $\pi$ subarcs of different $C_i$. Now, each $v_i$ is the point of intersection of a number of diagonals bounding segments in $D_i$. The intersection of all these segments is a polygonal region in $D_i$ bounded by two of these diagonals, and it follows from Lemma \ref{lemma:ngonangles} that the subarc $\alpha_i$ of $C_i$ corresponding to this intersection has length at least $2\pi/3$ (see Figure \ref{fig:tikzck}). 
	 
	 Each identification of arcs under $\phi$ is induced by an isometry between segments in discs $D_i$ and $D_j$. It follows that $\phi$ isometrically identifies the arcs $\alpha_i$ for all $i$. Refer to the image of these arcs in $L$ as $\alpha$. In particular, the set $C_1 \cap \dots \cap C_n \subset L$ is nonempty. 
	 
	 Also, for each $i$, there is an open arc in $L$ which is contained in the image of only $C_i$, and not $C_j$ for $j \neq i$. Call this arc $\beta_i$ (see Figure \ref{fig:tikzck}), using the same name to refer to the image in $L$ or the subarc in $C_i$. 
	 
	 
	 The intersection of any subset of the $C_i$ is connected (since it contains $\alpha$). Therefore, the intersection of any subset of size $\geq 2$ of the $C_i$ is simply connected; it is a proper connected subspace of a circle. It follows that the subgraph $L_0 = \bigcup_{i \neq j} \left( C_i \cap C_j \right) = L - \bigcup_{i}\beta_i$ is simply connected.

	\begin{figure}[h]
		\centering	
\begin{tikzpicture}
    \draw (210:6cm) coordinate (-1) (230:6cm) coordinate (0) (250:6cm) coordinate (1) (270:6cm) coordinate (2) (290:6cm) coordinate (3) (310:6cm) coordinate (4) (330:6cm) coordinate (5) (350:6cm) coordinate (6);
	\draw[thick] (-1) -- (0) -- (1) -- (2) -- (3) -- (4) -- (5) -- (6);
	\filldraw[black, opacity = .1] (0) -- (1) -- (2) -- (3) -- (0);	
	\filldraw[black, opacity = .1] (1) -- (2) -- (3) -- (4) -- (5) -- (1);
	\draw[thick, name path = path1] (0) -- (3);
	\draw[thick, name path = path2] (1) -- (5);
	\draw [name intersections={of=path1 and path2, by=p}];
	\fill (p) circle (1.5pt) node[above] {$v_i$};
	\draw[thick] (p) circle (0.5cm);
	\draw[very thick, green!50!black] (p)+(20:0.5cm) arc (20:170:0.5cm);
	\node[above right, inner sep = 12pt, color = green!50!black] at (p) {$\beta_i$};
	\draw[very thick, green!30!black] (p)+(-10:0.5cm) arc (-10:-160:0.5cm);
	\node[below right, inner sep = 12pt, color = green!30!black] at (p) {$\alpha_i$};
\end{tikzpicture}
		\caption{The link of the vertex $v_i$ in the disk $D_i$. Lemma \ref{lemma:ngonangles} implies that both arcs $\alpha_i$ and $\beta_i$ have length $>2\pi/3$.}
		\label{fig:tikzck}
	\end{figure}
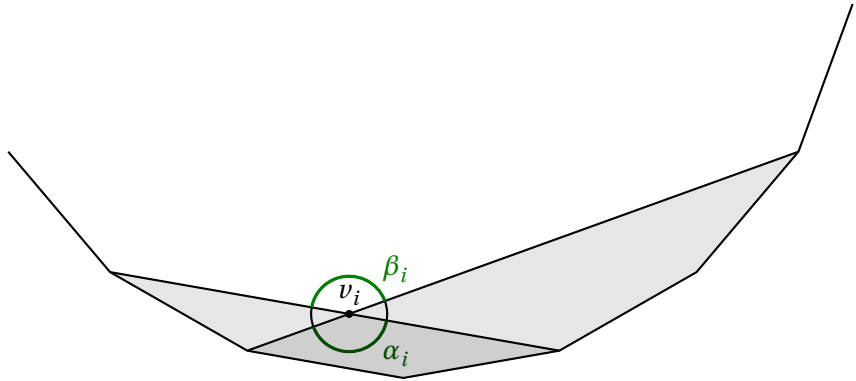

	 Now let $\ell$ be a geodesic loop in $L$. Since $L_0$ is simply connected, $\ell$ contains at least one $\beta_i$. If $\ell$ contains only one $\beta_i$ (say, $\beta_1$), then $C_1 - l_1$ is a geodesic path of length $2\pi - |\beta_i|$ between the endpoints of $\beta_1$ in $L_0$, and so must coincide with $\ell$ by simple connectedness of $L_0$; hence $\ell$ coincides with $C_1$ and has length $2\pi$. If $\ell$ contains at least three $\beta_i$, then it has length $>2\pi$ since each $\beta_i$ has length $>2\pi/3$. 
	 
	 The remaining case is that $\ell$ contains precisely two $\beta_i$; say $\beta_1$ and $\beta_2$.  Since $L_0$ is a tree, each component of $\ell - \{\beta_1 \cup \beta_2\}$ is contained in $C_1 \cup C_2$, and hence $\ell \subset C_1 \cup C_2$. 
	  Now $C_1 \cup C_2 \subset L$ is obtained from the circles $C_1$ and $C_2$ by identifying arcs of length $\pi$, all containing the common arc $\alpha$. If only a single arc of length $\pi$ is identified, then the 1-complex $C_1 \cup C_2$ clearly contains no loops of length $<2\pi$, in particular $\ell$ has length at least $2\pi$. Otherwise, $C_1$ and $C_2$ intersect in $L$ an arc longer than $\pi$. In this situation, there must be two intersecting segments in $D_1$ which are identified respectively with two overlapping segments in $D_2$. Therefore, there is a larger piece whose corresponding pair of segments was not identified (see Figure \ref{fig:ngonoverlap}). This contradicts the construction of the map $Y \rightarrow \overline{Y}$.

\begin{figure}[h]
	\centering	
\begin{tikzpicture}
	\begin{scope}
    \draw \foreach \x in {3, 4, ..., 10} { (\x*360/19:4cm) coordinate(\x) {} };	
	\coordinate(o) at (0:0);
	\filldraw[thick, red, opacity = 0.3] (6) -- (9) -- (8) -- (7) ;
	\filldraw[thick, blue, opacity = 0.3] (6) -- (5) -- (4) -- (7) ;
	\draw[thick, black] (4*360/19:4cm) \foreach \x in {3, 4, ..., 10} { -- (\x*360/19:4cm) coordinate(\x) {}} ;
    \draw[thin, black!20] \foreach \x in {3, 4, ..., 10} { (0:0) -- (\x*360/19:4cm)};
	\draw[thick, name=line1] (4) -- (7) node[below, pos = 0.79]  {$v_1$};
	\draw[thick, name=line2] (6) -- (9);
	\draw[dashed, fill = black, opacity = 0.2] (4) -- (5) -- (6) -- (7) -- (8) -- (9) -- (4);
	\end{scope}
	
	\begin{scope}[xshift = 7cm]
    \draw \foreach \x in {3, 4, ..., 10} { (\x*360/19:4cm) coordinate(\x) {} };	
	\coordinate(o) at (0:0);
	\filldraw[thick, red, opacity = 0.3] (6) -- (9) -- (8) -- (7) ;
	\filldraw[thick, blue, opacity = 0.3] (6) -- (5) -- (4) -- (7) ;
	\draw[thick, black] (4*360/19:4cm) \foreach \x in {3, 4, ..., 10} { -- (\x*360/19:4cm) coordinate(\x) {}} ;
    \draw[thin, black!20] \foreach \x in {3, 4, ..., 10} { (0:0) -- (\x*360/19:4cm)};
	\draw[thick, name=line1] (4) -- (7) node[below, pos = 0.79]  {$v_2$};
	\draw[thick, name=line2] (6) -- (9);
	\draw[dashed, fill = black, opacity = 0.2] (4) -- (5) -- (6) -- (7) -- (8) -- (9) -- (4);
	\end{scope}

\end{tikzpicture}
	\caption{If the red and blue pairs of segments are identified, then the union of the corresponding pieces is also a piece, and so the grey segments are also identified.}
	\label{fig:ngonoverlap}
\end{figure}
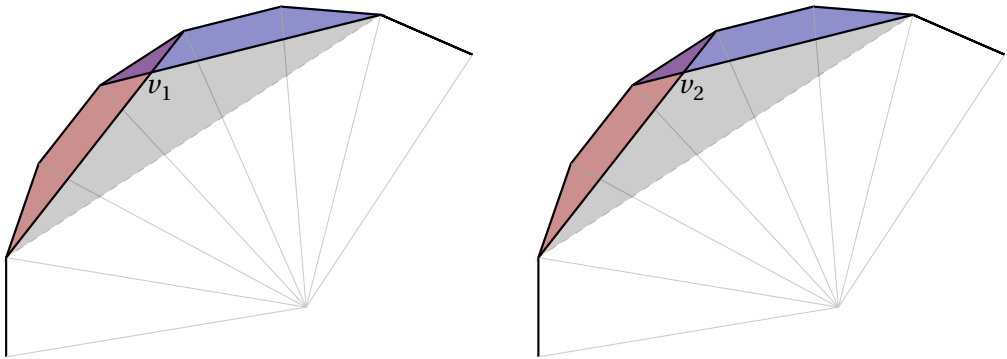

	  Hence, $\overline{Y}$ is \cat{-1}. Since the action of $G$ takes pieces to pieces, the invariance of the metric under the action of $G$ is clear, and the fact that the action is geometric follows from the fact that $Y$ was the presentation complex. This completes the proof.
	
\end{proof}

\begin{remark}\label{rmk:smallcanccat-1volume}
	Proposition \ref{prop:hyperboliclikeeuclidean} enables us to compute an approximate volume for the negatively curved complex constructed when we prove Theorem \ref{thm:smallcanccat-1}. The radius of each disc used in the construction is approximately $r_{\text{max}}(g/6)$, and the area of $D(g_i, g, r)$ is approximately $\pi r^2 g_i/g$ (approximating a flat polygon as a Euclidean disc). Hence, the area of the metrized presentation complex, before any folding is carried out, is approximately equal to \[A \approx \frac{ \Sigma_i g_i}{g} \times \pi r_{\text{max}}(g/6)^2,\]
	where \[
	r_{\text{max}}(n) = \cosh^{-1} \left( \frac{1}{\sqrt{3}} \cot \left(\frac{n\pi}{6n+1} \right)    \right).
	\]
	Of course, this is a slight overestimate, and decreases when folding is applied in a way that depends precisely on the pieces of the presentation.
\end{remark}

\bibliographystyle{alpha}
\bibliography{../phdbib}

\end{document}